\documentclass[12pt]{amsart}

\usepackage[T1]{fontenc}
\usepackage{lmodern}
\usepackage{cite}

\usepackage{amsmath,amssymb,amsthm,mathtools}
\usepackage{mathrsfs}
\usepackage{bm}

\usepackage{microtype}
\usepackage{enumitem}
\usepackage[a4paper,inner=0.8in,outer=0.8in,top=1.2in,bottom=1.4in]{geometry}

\usepackage[
    colorlinks=true,
    linkcolor=blue,
    citecolor=blue,
    urlcolor=blue
]{hyperref}

\usepackage[nameinlink,capitalize,noabbrev]{cleveref}

\allowdisplaybreaks

\numberwithin{equation}{section}

\theoremstyle{plain}

\newtheorem{Theorem}{Theorem}[section]

\theoremstyle{definition}

\newtheorem{Definition}[Theorem]{Definition}

\theoremstyle{remark}

\newtheorem{Remark}[Theorem]{Remark}

\crefname{Theorem}{Theorem}{Theorems}
\crefname{Proposition}{Proposition}{Propositions}
\crefname{Lemma}{Lemma}{Lemmas}
\crefname{Corollary}{Corollary}{Corollaries}
\crefname{Definition}{Definition}{Definitions}
\crefname{Example}{Example}{Examples}
\crefname{Remark}{Remark}{Remarks}

\numberwithin{equation}{section}

\DeclareMathOperator{\supp}{supp}
\newcommand{\loc}{\mathrm{loc}}

\title[The Zero-Mass Problem]{On the Zero-Mass Problem of Bandle, Levine, and Zhang}

\author[M. Jleli]{Mohamed Jleli}
\address{(M. Jleli) Department of Mathematics, College of Science, King Saud University, Riyadh 11451, Saudi Arabia}
\email{jleli@ksu.edu.sa}

\author[B. Samet]{Bessem Samet}
\address{(B. Samet) Department of Mathematics, College of Science, King Saud University, Riyadh 11451, Saudi Arabia}
\email{bsamet@ksu.edu.sa}

\date{}

\subjclass[2020]{35K58, 35B33, 35B44}

\keywords{Semilinear heat equation; zero-mass forcing; critical exponent; global nonexistence}

\begin{document}

\begin{abstract}
We study the critical behavior of the inhomogeneous semilinear heat
equation
\[
u_t-\Delta u=|u|^p+w(x)
\qquad\text{in }(0,\infty)\times\mathbb{R}^N,
\]
where \(N\geq3\), \(p>1\), and \(w\in L^1(\mathbb{R}^N)\) is nontrivial
and has zero total mass:
\[
\int_{\mathbb{R}^N}w(x)\,dx=0.
\]
This zero-mass case was posed as an open problem by Bandle, Levine,
and Zhang in 2000 and has remained unresolved since then. We provide
a complete answer to this problem by proving nonexistence of global
weak solutions for
\[
1<p\leq\frac{N}{N-2}.
\]
Combined with the known supercritical existence result for sufficiently
small data, this shows that the critical exponent separating the
nonexistence and existence regimes remains
\[
p_c=\frac{N}{N-2},
\]
the same as in the positive-mass case.
\end{abstract}

\maketitle

\section{Introduction}

In \cite{BLZ}, Bandle, Levine, and Zhang studied the inhomogeneous
semilinear heat equation
\begin{equation}\label{BLZ-problem}
\begin{cases}
u_t-\Delta u=|u|^p+w(x),
& (t,x)\in(0,\infty)\times\mathbb{R}^N,\\[1mm]
u(0,x)=u_0(x),
& x\in\mathbb{R}^N,
\end{cases}
\end{equation}
where \(p>1\).  They established the following result.

\begin{Theorem}[Bandle--Levine--Zhang]\label{T-BLZ}
Let \(N\geq3\).

\medskip

\noindent
{\rm (i)} If
\[
1<p<\frac{N}{N-2}
\qquad\text{and}\qquad
\int_{\mathbb{R}^N}w(x)\,dx>0,
\]
then problem~\eqref{BLZ-problem} admits no global solution.

\medskip

\noindent
{\rm (ii)} If
\[
p=\frac{N}{N-2}
\qquad\text{and}\qquad
\int_{\mathbb{R}^N}w(x)\,dx>0,
\]
then, under suitable additional assumptions on \(w\) and the solution,
problem~\eqref{BLZ-problem} admits no global solution.

\medskip

\noindent
{\rm (iii)} If
\[
p>\frac{N}{N-2},
\]
then global solutions exist for sufficiently small \(w\) and \(u_0\)
with suitable decay at infinity, regardless of whether or not
\[
\int_{\mathbb{R}^N}w(x)\,dx>0.
\]

\medskip

\noindent
{\rm (iv)} For every \(p>1\), there exist \(u_0\) and \(w\) with
\[
\int_{\mathbb{R}^N}w(x)\,dx<0
\]
for which problem~\eqref{BLZ-problem} admits a global solution.
\end{Theorem}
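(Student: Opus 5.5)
We treat the four parts separately. Parts (i) and (ii) are nonexistence statements, part (iii) is a small-data existence result, and part (iv) is an explicit construction.

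\textbf{Parts (i) and (ii): test-function method.} We argue by contradiction, supposing that a global weak solution exists. We multiply the equation by a cutoff $\varphi_R(t,x)=\eta(t/R^2)^{\ell}\,\xi(|x|/R)^{\ell}$, where $\eta,\xi$ are smooth, equal to $1$ near $0$ and vanish beyond $1$, and $\ell$ is large. Integrating by parts gives
\[
\int\!\!\int |u|^p\varphi_R+\int_0^\infty\!\!\int w\varphi_R \le \int\!\!\int |u|\,\bigl(|\partial_t\varphi_R|+|\Delta\varphi_R|\bigr),
\]
plus a term $\int u_0\varphi_R(0,\cdot)$ whose sign we control, for instance by assuming $u_0\ge0$ as is standard.

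We then apply Young's inequality $ab\le \tfrac12 a^p+C b^{p'}$ to absorb $|u|$ into the left-hand side. The remaining term is
\[
\int\!\!\int \varphi_R^{-p'/p}\bigl(|\partial_t\varphi_R|+|\Delta\varphi_R|\bigr)^{p'} \lesssim R^{N+2-2p'}.
\]
By dominated convergence the forcing term behaves like $c\,R^2\int w$, with $c>0$. This yields
\[
R^2\int w \lesssim R^{N+2-2p'}.
\]
Since $p<N/(N-2)$ is equivalent to $N-2p'<0$, letting $R\to\infty$ forces $\int w\le0$, a contradiction.

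In the critical case (ii) the two powers balance exactly. Here we use the additional assumption, e.g.\ $w\ge0$ or suitable decay, to show that $|u|^p\in L^1$. We then refine the estimate by applying Hölder's inequality only on the annular support of $\nabla\xi$, where the integral of $|u|^p$ tends to zero. This is the main obstacle of these two parts: the critical case needs a second limiting argument rather than a simple scaling comparison.

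\textbf{Part (iii): fixed point in weighted spaces.} For $p>N/(N-2)$ we look for a solution near the stationary Newtonian potential $(-\Delta)^{-1}w=\Gamma*w$, which decays like $|x|^{-(N-2)}$. We write $u=v+\Gamma*w$ and apply Banach's fixed point theorem in the space
\[
\Bigl\{\,v:\ \sup_{t,x}(1+|x|)^{N-2}|v|<\infty\,\Bigr\}
\]
to the Duhamel map $v\mapsto e^{t\Delta}v_0+\int_0^t e^{(t-s)\Delta}|v+\Gamma*w|^p\,ds$.

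The key step is the estimate
\[
\Gamma*(1+|x|)^{-(N-2)p}\lesssim (1+|x|)^{-(N-2)},
\]
which holds precisely because $(N-2)p>N$. Smallness of $w$ and $u_0$ then makes the map a contraction. No sign condition on $\int w$ is used anywhere in this argument.

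\textbf{Part (iv): explicit construction.} Take any smooth $U<0$ with $U=-c|x|^{-(N-2)}$ near infinity, and define $w=-\Delta U-|U|^p$. Then $U$ is a stationary, hence global, solution with $u_0=U$. To compute the mass, note that the flux gives $\int(-\Delta U)=-(N-2)c\,|S^{N-1}|<0$, and the term $-\int|U|^p$ is also negative. Both contributions are finite when $p>N/(N-2)$.

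For smaller $p$ this choice of $U$ makes $\int|U|^p$ diverge. In that range we instead take $U$ compactly supported with $U\le0$. Then $\int\Delta U=0$, so $\int w=-\int|U|^p<0$. This choice actually works for every $p>1$, so we simply use it throughout.
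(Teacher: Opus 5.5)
\textbf{The gap is in part (ii).} At \(p=N/(N-2)\) you have \(N+2-2p'=2\), so your inequality becomes
\[
\frac12\int_0^\infty\!\!\int_{\mathbb{R}^N}|u|^p\varphi_R\,dx\,dt+cR^2\Bigl(\int_{\mathbb{R}^N}w\,dx+o(1)\Bigr)\le CR^2 .
\]
This yields only \(\int_0^\infty\!\int|u|^p\varphi_R=O(R^2)\), not \(|u|^p\in L^1\).

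The device you borrow comes from the unforced Fujita problem: a uniform bound on \(\int\!\!\int|u|^p\), then H\"older on the annulus where the tail of \(|u|^p\) vanishes. There the right-hand side is \(O(1)\). Here the forcing term contributes at exactly the same order \(R^2\) as the error term, so no uniform bound emerges.

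An assumption such as \(w\ge0\) cannot supply the bound either. For \(w\ge0\), \(w\not\equiv0\), \(u_0\ge0\), every global (mild) solution satisfies \(u(t)\ge\int_0^te^{s\Delta}w\,ds\). This lower bound is positive and increases to \(\Gamma*w\), so \(\int_0^\infty\!\int|u|^p=\infty\). Deriving \(|u|^p\in L^1\) would therefore already be the contradiction, and your sketch gives no mechanism for it.

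\textbf{The fix.} Decouple the time and space scales and use a logarithmic spatial cutoff, as in the critical case of the proof of Theorem~\ref{T-main} with \(H_\varepsilon\equiv1\). Test with \(\eta_T(t)\bar\theta_R(x)\), where \(\eta_T\) is supported in \((T,2T)\) and \(\bar\theta_R=\rho_R^k\) has its transition on \(R^{1/2}<|x|<R\). This gives
\[
\int_{\mathbb{R}^N}w\,\bar\theta_R\,dx\preceq T^{-p'}R^N+(\ln R)^{1-p'} .
\]
Choosing \(T=R^2\ln R\) makes the right-hand side \(\preceq(\ln R)^{1-p'}\to0\). This contradicts \(\int w>0\) with no additional assumption on \(w\) or \(u\).

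\textbf{Smaller remarks.} In (i), the assumption \(u_0\ge0\) is not part of the statement and is unnecessary. A time cutoff supported in \((T,2T)\), as the paper uses, removes the initial term altogether. Parts (iii) and (iv) are sound as written. The paper only cites this theorem from Bandle, Levine, and Zhang, so there is no proof there to compare them with.
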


Theorem~\ref{T-BLZ} identifies
\[
p_c=\frac{N}{N-2}
\]
as the critical exponent governing the global behavior of
problem~\eqref{BLZ-problem}. A striking feature of this result is that
the presence of a time-independent forcing term with positive total mass
changes the critical threshold. Indeed, even an arbitrarily small
forcing term \(w\) satisfying
\[
\int_{\mathbb{R}^N}w(x)\,dx>0
\]
shifts the classical Fujita exponent~\cite{Fujita}
\[
p_F=1+\frac{2}{N}
\]
to
\[
p_c=\frac{N}{N-2}
=1+\frac{2}{N-2}.
\]
Thus, the total mass of the inhomogeneous term plays a decisive role in
the global behavior of solutions, independently of the magnitude of the
forcing term.

In Section~5 of the same work~\cite{BLZ}, Bandle, Levine, and Zhang
left open the case where the forcing term is nontrivial but has zero
total mass, namely,
\[
w\not\equiv0
\qquad\text{and}\qquad
\int_{\mathbb{R}^N}w(x)\,dx=0.
\]
More precisely, they asked whether this case exhibits the same behavior
as the positive-mass regime or rather that of the negative-mass regime.

Despite the numerous contributions devoted to the critical behavior of
inhomogeneous semilinear evolution equations, these studies typically
assume that the forcing term is either nonnegative and nontrivial or has
positive total mass; see, for instance,
\cite{BorikhanovTorebek,FHS,JSS,KumarTorebek,OzaSuragan,Pinsky,
TobakhanovTorebek,WYY}.
By contrast, the  zero-mass case has not been settled in full
generality.

Notice that Piccirillo, Toscano, and Toscano
\cite{PiccirilloToscanoToscano} stated that their results also address
the open problem of Bandle, Levine, and Zhang. They consider an
inhomogeneous term \(f=f(x,t)\), but their assumptions do not cover the
original zero-mass setting of \cite{BLZ}. Indeed, the nonexistence result
relevant to this problem requires, among other conditions,
\[
f^-\in L^1(\mathbb{R}^N\times[0,\infty))
\qquad\text{and}\qquad
u_0^-\in L^1(\mathbb{R}^N),
\]
together with a nontrivial lower bound on \(f^+\) for large \(|x|\).
For a time-independent forcing term \(w=w(x)\not\equiv0\) satisfying
\[
w\in L^1(\mathbb{R}^N)
\qquad\text{and}\qquad
\int_{\mathbb{R}^N}w(x)\,dx=0,
\]
the first condition is necessarily violated, since
\[
\int_0^\infty\int_{\mathbb{R}^N}w^-(x)\,dx\,dt=+\infty.
\]
Thus, the zero-mass problem posed in \cite{BLZ} remains open in the
generality formulated there.

In this paper, we provide a complete answer to this open problem.
Namely, we prove that every nontrivial integrable forcing term with zero
total mass prevents the existence of global weak solutions in the
subcritical and critical ranges. Combined with the supercritical
existence result of Bandle, Levine, and Zhang
(Theorem~\ref{T-BLZ}{\rm (iii)}), this shows that the zero-mass case
exhibits the same critical threshold as the positive-mass regime.

Our main result reads as follows.

\begin{Theorem}\label{T-main}
Let \(N\geq3\) and let \(w\in L^1(\mathbb{R}^N)\) satisfy
\[
w\not\equiv0
\qquad\text{and}\qquad
\int_{\mathbb{R}^N}w(x)\,dx=0.
\]
If
\[
1<p\leq\frac{N}{N-2},
\]
then the equation
\begin{equation}\label{M-eq}
u_t-\Delta u=|u|^p+w(x)
\qquad\text{in }(0,\infty)\times\mathbb{R}^N
\end{equation}
admits no global weak solution.
\end{Theorem}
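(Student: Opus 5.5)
The plan is a test-function (rescaled capacity) argument in which the spatial test function is perturbed so that it sees the zero-mass forcing with a definite sign. Suppose, for contradiction, that $u$ is a global weak solution, meaning $u\in L^p_{\loc}$ and, for every nonnegative $\varphi\in C_c^{2,1}((0,\infty)\times\mathbb{R}^N)$ (or with the initial datum included, which only helps),
\[
\iint |u|^p\varphi+\iint w\varphi\;\le\;\iint |u|\,\bigl(|\varphi_t|+|\Delta\varphi|\bigr).
\]
Since $w\not\equiv0$, there is $h\in C_c^\infty(\mathbb{R}^N)$ with $a:=\int wh>0$. Fix cutoffs $\eta\in C_c^\infty(0,\infty)$ and $\psi\in C_c^\infty(\mathbb{R}^N)$ with $\psi=1$ on $B_1$. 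Set
\[
\varphi(t,x)=\eta(t/T)^k\,\xi(x),\qquad \xi=\psi_R+\varepsilon h,
\]
with $\psi_R=\psi(\cdot/R)^k$, $k$ large, $R\ge R_0$ so that $\psi_R\equiv1$ on $\operatorname{supp}h$, and $\varepsilon\|h\|_\infty\le 1/2$. Then $\xi\ge 1/2$ on $\operatorname{supp} h$, and $\varphi\ge0$.

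The key observation is the forcing term. Because $\int w=0$ and $w\in L^1$, we have $\int w\psi_R\to\int w=0$ as $R\to\infty$, hence
\[
\iint w\varphi = c_\eta T\Bigl(\varepsilon a+o_R(1)\Bigr).
\]
The right-hand side is handled by Young's inequality, $|u||D\varphi|\le \tfrac12|u|^p\varphi+C|D\varphi|^{p'}\varphi^{-p'/p}$, estimating separately the pieces coming from $\eta'$, from $\Delta\psi_R$ (supported in the annulus $R<|x|<2R$), and from $\varepsilon\Delta h$ (on a fixed compact set where $\varphi\gtrsim\eta^k$). This gives
\[
\tfrac12\iint|u|^p\varphi+c\,T(\varepsilon a+o_R(1))\;\le\; C\bigl(T^{1-p'}R^N+T R^{N-2p'}+\varepsilon^{p'}T\bigr).
\]
Dividing by $T$, I first fix $\varepsilon$ so small that $C\varepsilon^{p'}\le c\varepsilon a/4$, which is possible because $p'>1$. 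Then I let $T\to\infty$ with $R$ fixed, which kills $T^{-p'}R^N$. Finally I let $R\to\infty$. In the subcritical range $p<N/(N-2)$ we have $N-2p'<0$, so the remaining error vanishes, and the resulting inequality $\varepsilon a\le \varepsilon a/2$ is a contradiction.

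The main obstacle is the critical case $p=N/(N-2)$, where $N=2p'$ and the term $TR^{N-2p'}=T$ no longer decays. There I would not use Young's inequality for the annular piece. Instead I would apply Hölder's inequality restricted to the annulus,
\[
\iint|u||\Delta\psi_R|\eta^k\le C\Bigl(\iint_{R<|x|<2R}|u|^p\varphi\Bigr)^{1/p}T^{1/p'},
\]
and combine it with the first inequality, which shows that $T^{-1}\iint|u|^p\varphi$ is bounded uniformly in $T$ and $R$. To convert this boundedness into smallness of the annular contribution, I would replace $\psi_R$ by a logarithmic cutoff built from the fundamental solution, namely $\psi_R(x)=\Psi\bigl(\log|x|/\log R\bigr)$ supported on $|x|\le R^2$ and equal to $1$ on $|x|\le R$ (equivalently, average the estimate over dyadic scales). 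Then $\int|\Delta\psi_R|^{p'}\psi_R^{-p'/p}\,dx\lesssim(\log R)^{1-p'}\to0$ at $N=2p'$, while $\int w\psi_R\to0$ is preserved. This should give the same contradiction as in the subcritical case. Checking that the log cutoff still keeps $\xi\ge0$ and interacts correctly with the $\varepsilon h$ perturbation is the step I expect to require the most care.
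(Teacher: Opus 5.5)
Your proposal is correct and follows essentially the paper's argument. The paper uses the same perturbed spatial test function $\psi_R+\varepsilon h$ (its $H_\varepsilon\theta_R=\theta_R+\varepsilon\psi$), the disjointness of the supports of $\Delta\psi_R$ and $\varepsilon\Delta h$, absorption of the $\varepsilon^{p'}$ error for small $\varepsilon$, and, at $p=N/(N-2)$, a logarithmic cutoff yielding $(\log R)^{1-p'}$. Once you use the log cutoff, the H\"older detour you sketch in the critical case is unnecessary: the same Young-inequality estimate as in the subcritical case closes the argument, as in the paper, and the claim there that $T^{-1}\iint|u|^p\varphi$ is bounded uniformly in $T$ and $R$ is not accurate anyway. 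The step you flag as delicate is automatic: $\psi_R\equiv1$ on $\supp h$ for large $R$, so $\xi=1+\varepsilon h\ge 1/2$ there and $\xi=\psi_R\ge0$ elsewhere.
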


\begin{Remark}
No initial condition is imposed in Theorem~\ref{T-main}. In particular,
the nonexistence conclusion is independent of the choice of the initial
data.
\end{Remark}

We next briefly describe the main idea behind the proof of
Theorem~\ref{T-main}. To motivate our approach, we explain why the
argument of Bandle, Levine, and Zhang~\cite{BLZ} does not directly
extend to the zero-mass case. Indeed, their
approach relies on a family of cut-off functions \(\phi_R\) satisfying
\[
\int_{\mathbb{R}^N} w(x)\phi_R(x)\,dx
\longrightarrow
\int_{\mathbb{R}^N} w(x)\,dx
\qquad\text{as }R\to\infty.
\]
When the total mass is positive, this yields a uniform positive lower
bound
\[
\int_{\mathbb{R}^N} w(x)\phi_R(x)\,dx\geq\delta>0
\]
for all sufficiently large \(R\), which is a key ingredient in their
nonexistence arguments. In the zero-mass case, however,
\[
\int_{\mathbb{R}^N} w(x)\phi_R(x)\,dx\longrightarrow0,
\]
and such a positive lower bound is no longer available.

To overcome this difficulty, we exploit the fact that \(w\not\equiv0\).
Hence there exists a function \(\psi\in C_c^\infty(\mathbb{R}^N)\)
such that
\[
\int_{\mathbb{R}^N}w(x)\psi(x)\,dx>0.
\]
We then perturb the constant function by setting
\[
H_\varepsilon(x)=1+\varepsilon\psi(x),
\]
with \(\varepsilon>0\) sufficiently small. Since
\[
\int_{\mathbb{R}^N}w(x)\,dx=0,
\]
we obtain
\[
\int_{\mathbb{R}^N}w(x)H_\varepsilon(x)\,dx
=
\varepsilon
\int_{\mathbb{R}^N}w(x)\psi(x)\,dx>0.
\]
Thus, the perturbation \(H_\varepsilon\) restores a positive weighted
mass, while introducing only the error
\[
\Delta H_\varepsilon=\varepsilon\Delta\psi.
\]
We then combine \(H_\varepsilon\) with suitable large-scale cut-off
functions. Since \(\psi\) is compactly supported, for sufficiently large
cut-off radii the cut-off is identically \(1\) on \(\supp\psi\), while
its transition region is disjoint from \(\supp\Delta\psi\). This allows
the contributions of \(\varepsilon\Delta\psi\) and of the derivatives
of the cut-off to be estimated separately.

The detailed proof of Theorem~\ref{T-main} is given in the next section. 

\section{Proof of Theorem~\ref{T-main}}\label{S-proof}

Throughout the rest of the paper, \(C>0\) denotes a generic constant,
independent of \(x\), \(t\), \(R\), \(T\), and \(\varepsilon\), whose
value may change from line to line. For nonnegative quantities \(a\)
and \(b\), we write \(a\preceq b\) if \(a\leq Cb\), and \(a\asymp b\)
if \(a\preceq b\) and \(b\preceq a\). For \(p>1\), we denote by
\(p'=\frac{p}{p-1}\) the conjugate exponent of \(p\). For \(r>0\),
\(B(0,r)\) denotes the open ball in \(\mathbb{R}^N\) centered at the
origin with radius \(r\).

We first define the notion of a global weak solution that will be used
throughout the paper. 

\begin{Definition}\label{D-weak}
Let \(p>1\) and \(w\in L^1_{\loc}(\mathbb{R}^N)\). A function
\[
u\in L^p_{\loc}\bigl((0,\infty)\times\mathbb{R}^N\bigr)
\]
is called a global weak solution of \eqref{M-eq} if, for every
\(\varphi\in C_c^{1,2}\bigl((0,\infty)\times\mathbb{R}^N\bigr)\),
\(\varphi\geq0\), one has
\begin{equation}\label{ws}
\int_0^\infty\int_{\mathbb{R}^N}
|u|^p\varphi\,dx\,dt
+
\int_0^\infty\int_{\mathbb{R}^N}
w(x)\varphi\,dx\,dt
=
-\int_0^\infty\int_{\mathbb{R}^N}
u\varphi_t\,dx\,dt
-
\int_0^\infty\int_{\mathbb{R}^N}
u\Delta\varphi\,dx\,dt.
\end{equation}
\end{Definition}

\begin{Remark}
Since every
\(\varphi\in C_c^{1,2}\bigl((0,\infty)\times\mathbb{R}^N\bigr)\)
has support bounded away from \(t=0\), no initial term appears in
\eqref{ws}.
\end{Remark}

\begin{proof}[Proof of Theorem~\ref{T-main}]

Assume, by contradiction, that \eqref{M-eq} admits a global weak
solution \(u\).

Let
\(\varphi\in C_c^{1,2}\bigl((0,\infty)\times\mathbb{R}^N\bigr)\),
\(\varphi\geq0\), and define
\begin{align}
J_1(\varphi)
&=
\int_0^\infty\int_{\mathbb{R}^N}
\frac{|\varphi_t|^{p'}}{\varphi^{p'-1}}\,dx\,dt,
\label{J1}\\
J_2(\varphi)
&=
\int_0^\infty\int_{\mathbb{R}^N}
\frac{|\Delta\varphi|^{p'}}{\varphi^{p'-1}}\,dx\,dt.
\label{J2}
\end{align}
Assume that
\[
J_1(\varphi)+J_2(\varphi)<\infty.
\]
By \eqref{ws} and Young's inequality,
\[
\begin{aligned}
&\int_0^\infty\int_{\mathbb{R}^N}|u|^p\varphi\,dx\,dt
+
\int_0^\infty\int_{\mathbb{R}^N}w(x)\varphi\,dx\,dt\\
&\quad\leq
\int_0^\infty\int_{\mathbb{R}^N}|u||\varphi_t|\,dx\,dt
+
\int_0^\infty\int_{\mathbb{R}^N}|u||\Delta\varphi|\,dx\,dt\\
&\quad\leq
\int_0^\infty\int_{\mathbb{R}^N}|u|^p\varphi\,dx\,dt
+
C\bigl(J_1(\varphi)+J_2(\varphi)\bigr).
\end{aligned}
\]
Therefore,
\begin{equation}\label{apriori}
\int_0^\infty\int_{\mathbb{R}^N}w(x)\varphi\,dx\,dt
\preceq
J_1(\varphi)+J_2(\varphi).
\end{equation}

Since \(w\not\equiv0\), there exists
\(\psi\in C_c^\infty(\mathbb{R}^N)\) such that, replacing \(\psi\) by
\(-\psi\) if necessary,
\begin{equation}\label{def-A}
A=\int_{\mathbb{R}^N}w(x)\psi(x)\,dx>0.
\end{equation}
For \(\varepsilon>0\), to be chosen sufficiently small, define
\[
H_\varepsilon(x)=1+\varepsilon\psi(x).
\]
As \(\psi\) is bounded, we may choose \(\varepsilon>0\) so that
\begin{equation}\label{Heps-bounds}
\frac12\leq H_\varepsilon(x)\leq\frac32
\qquad\text{for all }x\in\mathbb{R}^N.
\end{equation}
Using \(\int_{\mathbb{R}^N}w(x)\,dx=0\) together with
\eqref{def-A}, we obtain
\begin{equation}\label{weighted-mass}
\int_{\mathbb{R}^N}w(x)H_\varepsilon(x)\,dx
=
\varepsilon A>0.
\end{equation}
Moreover,
\[
\Delta H_\varepsilon=\varepsilon\Delta\psi,
\]
and hence, by \eqref{Heps-bounds},
\begin{equation}\label{Heps-error}
\int_{\mathbb{R}^N}
\frac{|\Delta H_\varepsilon|^{p'}}
{H_\varepsilon^{p'-1}}\,dx
\preceq
\varepsilon^{p'}.
\end{equation}

Let \(\eta\in C_c^\infty(\mathbb{R})\) satisfy
\[
0\leq\eta\leq1,\qquad
\eta\not\equiv0,\qquad
\supp\eta\subset(1,2),
\]
and fix a positive integer \(k>2p'\). For \(T>0\), define
\[
\eta_T(t)=\eta^k\left(\frac{t}{T}\right).
\]
A change of variables gives
\begin{equation}\label{etaT-int}
\int_0^\infty\eta_T(t)\,dt\asymp T,
\end{equation}
and
\begin{equation}\label{etaT-der}
\int_0^\infty
\frac{|\eta_T'(t)|^{p'}}
{\eta_T(t)^{p'-1}}\,dt
\preceq T^{1-p'}.
\end{equation}

We first consider the subcritical case. Assume that
\[
1<p<\frac{N}{N-2}.
\]
Choose \(\xi\in C^\infty([0,\infty))\) such that
\[
0\leq\xi\leq1,\qquad
\xi=1\ \text{on }[0,1/2],\qquad
\xi=0\ \text{on }[1,\infty).
\]
For \(R>0\), set
\[
\theta_R(x)=\xi^k\left(\frac{|x|}{R}\right),
\qquad
\phi_R(x)=H_\varepsilon(x)\theta_R(x).
\]

Since \(\psi\) is compactly supported, there exists \(R_0>0\) such that
\begin{equation}\label{supp-psi-ball}
\supp\psi\subset B(0,R_0).
\end{equation}
For \(R>2R_0\), the properties of \(\xi\) imply that
\begin{equation}\label{1-supp-psi}
\theta_R=1
\qquad\text{on }\supp\psi.
\end{equation}
Hence,
\begin{equation}\label{phiR-decomp}
\phi_R
=
\theta_R+\varepsilon\psi,
\end{equation}
and consequently
\begin{equation}\label{Delta-phiR}
\Delta\phi_R
=
\Delta\theta_R+\varepsilon\Delta\psi.
\end{equation}
Moreover,
\[
\supp(\Delta\theta_R)
\subset
\overline{B(0,R)\setminus B(0,R/2)}.
\]
Therefore, \eqref{supp-psi-ball} and \(R>2R_0\) yield
\begin{equation}\label{disjoint-supports-psi}
\supp(\Delta\theta_R)\cap\supp\psi=\varnothing.
\end{equation}
Since \(\supp(\Delta\psi)\subset\supp\psi\), we also have
\begin{equation}\label{disjoint-supports}
\supp(\Delta\theta_R)\cap\supp(\Delta\psi)=\varnothing.
\end{equation}

From \eqref{Heps-bounds} and the properties of \(\theta_R\),
\begin{equation}\label{phiR-int}
\int_{\mathbb{R}^N}\phi_R(x)\,dx
\asymp R^N.
\end{equation}

On \(\supp(\Delta\theta_R)\), we have \(|x|\asymp R\). Hence, the
definition of \(\theta_R\) and the properties of \(\xi\) yield
\[
|\Delta\theta_R(x)|
\preceq
R^{-2}\xi^{k-2}\left(\frac{|x|}{R}\right).
\]
Consequently,
\[
\frac{|\Delta\theta_R|^{p'}}
{\theta_R^{p'-1}}
\preceq
R^{-2p'}
\xi^{k-2p'}\left(\frac{|x|}{R}\right).
\]
Using \(k>2p'\) and \(0\leq\xi\leq1\), we obtain
\begin{equation}\label{thetaR-est}
\int_{\mathbb{R}^N}
\frac{|\Delta\theta_R|^{p'}}
{\theta_R^{p'-1}}\,dx
\preceq
R^{N-2p'}.
\end{equation}

From \eqref{phiR-decomp}, \eqref{disjoint-supports-psi}, and
\eqref{1-supp-psi}, we have
\[
\phi_R=\theta_R
\quad\text{on }\supp(\Delta\theta_R),
\qquad
\phi_R=H_\varepsilon
\quad\text{on }\supp(\Delta\psi).
\]
By \eqref{Delta-phiR} and \eqref{disjoint-supports},
\(\Delta\theta_R\) and \(\varepsilon\Delta\psi\) have disjoint supports.
Hence,
\[
\int_{\mathbb{R}^N}
\frac{|\Delta\phi_R|^{p'}}
{\phi_R^{p'-1}}\,dx
=
\int_{\supp(\Delta\theta_R)}
\frac{|\Delta\phi_R|^{p'}}
{\phi_R^{p'-1}}\,dx
+
\int_{\supp(\Delta\psi)}
\frac{|\Delta\phi_R|^{p'}}
{\phi_R^{p'-1}}\,dx.
\]
Using \eqref{Delta-phiR}, \eqref{disjoint-supports}, and the preceding
identities, we obtain
\[
\int_{\mathbb{R}^N}
\frac{|\Delta\phi_R|^{p'}}
{\phi_R^{p'-1}}\,dx
=
\int_{\supp(\Delta\theta_R)}
\frac{|\Delta\theta_R|^{p'}}
{\theta_R^{p'-1}}\,dx
+
\int_{\supp(\Delta\psi)}
\frac{|\Delta H_\varepsilon|^{p'}}
{H_\varepsilon^{p'-1}}\,dx.
\]
Combining \eqref{Heps-error} and \eqref{thetaR-est}, we obtain
\begin{equation}\label{phiR-est}
\int_{\mathbb{R}^N}
\frac{|\Delta\phi_R|^{p'}}
{\phi_R^{p'-1}}\,dx
\preceq
R^{N-2p'}+\varepsilon^{p'}.
\end{equation}

Moreover, \eqref{def-A} and \eqref{phiR-decomp} give
\[
\int_{\mathbb{R}^N}w(x)\phi_R(x)\,dx
=
\int_{\mathbb{R}^N}w(x)\theta_R(x)\,dx+\varepsilon A.
\]
Since \(0\leq\theta_R\leq1\) and \(\theta_R(x)\to1\) pointwise, the
dominated convergence theorem yields
\[
\int_{\mathbb{R}^N}w(x)\theta_R(x)\,dx\to0
\qquad\text{as }R\to\infty.
\]
Therefore, for all sufficiently large \(R\),
\begin{equation}\label{w-phiR-lower}
\int_{\mathbb{R}^N}w(x)\phi_R(x)\,dx
\geq
\frac{\varepsilon A}{2}.
\end{equation}

For \(R,T\gg1\), define
\[
\varphi_{R,T}(t,x)=\eta_T(t)\phi_R(x).
\]
Then
\[
\varphi_{R,T}
\in C_c^{1,2}\bigl((0,\infty)\times\mathbb{R}^N\bigr),
\qquad
\varphi_{R,T}\geq0.
\]

Using Fubini's theorem together with \eqref{etaT-der},
\eqref{phiR-int}, \eqref{etaT-int}, and \eqref{phiR-est}, we obtain
\begin{equation}\label{J1-sub}
J_1(\varphi_{R,T})
\preceq
T^{1-p'}R^N
\end{equation}
and
\begin{equation}\label{J2-sub}
J_2(\varphi_{R,T})
\preceq
T\left(R^{N-2p'}+\varepsilon^{p'}\right).
\end{equation}
Moreover, \eqref{etaT-int} and \eqref{w-phiR-lower} yield
\begin{equation}\label{w-varphi-lower}
\varepsilon A T
\preceq
\int_0^\infty\int_{\mathbb{R}^N}
w(x)\varphi_{R,T}(t,x)\,dx\,dt.
\end{equation}

Applying \eqref{apriori} with \(\varphi=\varphi_{R,T}\), and using
\eqref{J1-sub}, \eqref{J2-sub}, and \eqref{w-varphi-lower}, we obtain
\[
\varepsilon A
\preceq
T^{-p'}R^N
+
R^{N-2p'}
+
\varepsilon^{p'}.
\]
Since \(p'>1\) and \(A>0\), choosing \(\varepsilon>0\) sufficiently
small allows the last term to be absorbed into the left-hand side.
Hence,
\[
\varepsilon A
\preceq
T^{-p'}R^N+R^{N-2p'}.
\]
Since \(1<p<\frac{N}{N-2}\), we have \(N-2p'<0\). Letting
\(R\to\infty\) yields a contradiction with \(\varepsilon A>0\).

We next consider the critical case. Assume that
\[
p=\frac{N}{N-2}.
\]
Choose \(\zeta\in C^\infty(\mathbb{R})\) such that
\[
0\leq\zeta\leq1,\qquad
\zeta=1\ \text{on }(-\infty,0],\qquad
\zeta=0\ \text{on }[1,\infty).
\]
For \(R>1\), define
\[
\rho_R(x)
=
\begin{cases}
1, & |x|\leq R^{1/2},\\[1mm]
\displaystyle
\zeta\left(\frac{2\ln |x|}{\ln R}-1\right),
& R^{1/2}<|x|<R,\\[2mm]
0, & |x|\geq R,
\end{cases}
\]
and set
\[
\bar\theta_R(x)=\rho_R^k(x),
\qquad
\bar\phi_R(x)=H_\varepsilon(x)\bar\theta_R(x).
\]

For \(R>\max\{1,R_0^2\}\), we have
\begin{equation}\label{bar-theta-one}
\bar\theta_R=1
\qquad\text{on }\supp\psi.
\end{equation}
Hence,
\begin{equation}\label{bar-phiR-decomp}
\bar\phi_R
=
\bar\theta_R+\varepsilon\psi,
\end{equation}
and consequently
\begin{equation}\label{Delta-bar-phiR}
\Delta\bar\phi_R
=
\Delta\bar\theta_R+\varepsilon\Delta\psi.
\end{equation}
Moreover,
\[
\supp(\Delta\bar\theta_R)
\subset
\overline{B(0,R)\setminus B(0,R^{1/2})}.
\]
Thus, by \eqref{supp-psi-ball},
\begin{equation}\label{bar-disjoint-supports}
\supp(\Delta\bar\theta_R)\cap\supp\psi=\varnothing,
\qquad
\supp(\Delta\bar\theta_R)\cap\supp(\Delta\psi)=\varnothing.
\end{equation}

By \eqref{Heps-bounds} and the support properties of \(\bar\theta_R\),
\begin{equation}\label{bar-phiR-int}
\int_{\mathbb{R}^N}\bar\phi_R(x)\,dx
\preceq R^N.
\end{equation}
Moreover, for \(R\) sufficiently large and
\(R^{1/2}<|x|<R\), the definition of \(\bar\theta_R\) and the
properties of \(\zeta\) yield
\[
|\Delta\bar\theta_R(x)|
\preceq
\frac{1}{|x|^2\ln R}
\zeta^{k-2}\left(\frac{2\ln|x|}{\ln R}-1\right).
\]
Consequently, since \(k>2p'\) and \(0\leq\zeta\leq1\),
\[
\frac{|\Delta\bar\theta_R|^{p'}}
{\bar\theta_R^{p'-1}}
\preceq
\frac{1}{|x|^{2p'}(\ln R)^{p'}}.
\]
Using \(p'=N/2\), we obtain
\begin{equation}\label{bar-thetaR-est}
\int_{\mathbb{R}^N}
\frac{|\Delta\bar\theta_R|^{p'}}
{\bar\theta_R^{p'-1}}\,dx
\preceq
(\ln R)^{-p'}
\int_{R^{1/2}}^R \frac{dr}{r}
\preceq
(\ln R)^{1-p'}.
\end{equation}

From \eqref{bar-phiR-decomp}, \eqref{bar-theta-one}, and
\eqref{bar-disjoint-supports}, we have
\[
\bar\phi_R=\bar\theta_R
\quad\text{on }\supp(\Delta\bar\theta_R),
\qquad
\bar\phi_R=H_\varepsilon
\quad\text{on }\supp(\Delta\psi).
\]
Moreover, by \eqref{Delta-bar-phiR} and
\eqref{bar-disjoint-supports}, the functions
\(\Delta\bar\theta_R\) and \(\varepsilon\Delta\psi\) have disjoint
supports. Therefore,
\[
\int_{\mathbb{R}^N}
\frac{|\Delta\bar\phi_R|^{p'}}
{\bar\phi_R^{p'-1}}\,dx
=
\int_{\supp(\Delta\bar\theta_R)}
\frac{|\Delta\bar\theta_R|^{p'}}
{\bar\theta_R^{p'-1}}\,dx
+
\int_{\supp(\Delta\psi)}
\frac{|\Delta H_\varepsilon|^{p'}}
{H_\varepsilon^{p'-1}}\,dx.
\]
Combining \eqref{Heps-error} and \eqref{bar-thetaR-est}, we obtain
\begin{equation}\label{bar-phiR-est}
\int_{\mathbb{R}^N}
\frac{|\Delta\bar\phi_R|^{p'}}
{\bar\phi_R^{p'-1}}\,dx
\preceq
(\ln R)^{1-p'}+\varepsilon^{p'}.
\end{equation}
Arguing as in the subcritical case, we obtain
\begin{equation}\label{w-bar-phiR-lower}
\int_{\mathbb{R}^N}w(x)\bar\phi_R(x)\,dx
\geq
\frac{\varepsilon A}{2}
\end{equation}
for all sufficiently large \(R\).

For \(R,T\gg1\), define
\[
\bar\varphi_{R,T}(t,x)
=
\eta_T(t)\bar\phi_R(x).
\]
Arguing as in the subcritical case and using
\eqref{etaT-der}, \eqref{bar-phiR-int}, \eqref{etaT-int}, and
\eqref{bar-phiR-est}, we obtain
\[
J_1(\bar\varphi_{R,T})
\preceq
T^{1-p'}R^N,
\qquad
J_2(\bar\varphi_{R,T})
\preceq
T\left((\ln R)^{1-p'}+\varepsilon^{p'}\right).
\]
Moreover, \eqref{etaT-int} and \eqref{w-bar-phiR-lower} give
\[
\varepsilon A T
\preceq
\int_0^\infty\int_{\mathbb{R}^N}
w(x)\bar\varphi_{R,T}(t,x)\,dx\,dt.
\]
Applying \eqref{apriori} with
\(\varphi=\bar\varphi_{R,T}\) and using the above estimates, we obtain
\[
\varepsilon A
\preceq
T^{-p'}R^N
+
(\ln R)^{1-p'}
+
\varepsilon^{p'}.
\]
Choosing \(\varepsilon>0\) sufficiently small allows the last term to
be absorbed into the left-hand side. Hence,
\[
\varepsilon A
\preceq
T^{-p'}R^N+(\ln R)^{1-p'}.
\]
Taking \(T=R^2\ln R\) and recalling that \(p'=N/2\), we obtain
\[
\varepsilon A
\preceq
(\ln R)^{-p'}+(\ln R)^{1-p'}.
\]
Since \(p'>1\), letting \(R\to\infty\) yields a contradiction with
\(\varepsilon A>0\). This completes the proof of
Theorem~\ref{T-main}.
\end{proof}

\section*{Declaration of competing interest}
The authors declare that there is no conflict of interest.

\section*{Data Availability Statements}
The manuscript has no associated data.

\end{document}